\newtheorem{thm}{Theorem}[section]
\newtheorem{lem}[thm]{Lemma}
\newtheorem{prop}[thm]{Proposition}
\theoremstyle{definition}
\theoremstyle{remark}
\newtheorem{rem}[thm]{Remark}
\begin{document}

\title[Counterexample to Sobolev regularity]{Some counterexamples to Sobolev regularity for degenerate Monge-Amp\`{e}re equations}
\author{Connor Mooney}
\address{Department of Mathematics, UT Austin, Austin, TX 78712}
\email{\tt  cmooney@math.utexas.edu}

\subjclass[2010]{35J96, 35B65}
\keywords{degenerate Monge-Amp\`{e}re, Sobolev regularity}

\begin{abstract}
We construct a counterexample to $W^{2,1}$ regularity for convex solutions to 
$$\det D^2u \leq 1, \quad u|_{\partial \Omega} = 0$$ 
in two dimensions. We also prove a result on the propagation of singularities in two dimensions that are logarithmically
slower than Lipschitz. This generalizes a classical result of Alexandrov and is optimal by example.
\end{abstract}
\maketitle

\section{Introduction}
In this paper we investigate the $W^{2,1}$ regularity of Alexandrov solutions to degenerate Monge-Amp\`{e}re equations of the form
\begin{equation}\label{DMA}
 \det D^2u(x) = \rho(x) \leq 1 \text{ in } \Omega, \quad u|_{\partial \Omega} = 0,
\end{equation}
where $\Omega$ is a bounded convex domain in $\mathbb{R}^n$.

In the case that $\rho$ also has a positive lower bound, $W^{2,1}$ estimates were first obtained by De Philippis and Figalli (\cite{DF2}).
They showed that $\Delta u \log^{k}(2 + \Delta u)$ is integrable for any $k$.
It was subsequently shown that $D^2u$ is in fact $L^{1+\epsilon}$ for some $\epsilon$ depending
on dimension and $\|1/\rho\|_{L^{\infty}(\Omega)}$ (see \cite{DFS}, \cite{Sch}). These estimates are optimal in light of two-dimensional examples due to Wang (\cite{W}) with the homogeneity
$$u(\lambda x_1, \lambda^{\alpha} x_2) = \lambda^{1+\alpha}u(x_1, x_2).$$

These estimates fail when $\rho$ degenerates. In three and higher dimensions, it is not hard to construct solutions 
to ($\ref{DMA}$) that have a Lipschitz singularity on part of a hyperplane, so the second derivatives concentrate (see Section \ref{HighDim}). 
However, in two dimensions, a classical result of Alexandrov (\cite{A}, see also \cite{FL}) shows that Lipschitz singularities of solutions to $\det D^2u \leq 1$ propagate to the boundary. Thus,
in two dimensions solutions to ($\ref{DMA}$) are $C^1$ and $D^2u$ has no jump part. However, this leaves open the possibility that $D^2u$ has
nonzero Cantor part.

The main result of this paper is the construction of a solution to ($\ref{DMA}$) in two dimensions that is not $W^{2,1}$. This negatively answers an open
problem stated in both (\cite{DF1}) and (\cite{F}), which was motivated by potential applications to the semigeostrophic equation. We also prove
that singularities that are logarithmically slower than Lipschitz propagate, which generalizes the theorem of Alexandrov and is optimal by example.

The $W^{2,1}$ estimates mentioned above have applications to the global existence of weak solutions to the semigeostrophic equation (\cite{ACDF1}, \cite{ACDF2}). 
In this context, the density $\rho$ solves a continuity equation that preserves $L^{\infty}$ bounds. This is the only regularity property of $\rho$ that is globally preserved, due to nonlinear
coupling between $\rho$ and the velocity field. It is therefore useful to obtain estimates that depend on $L^{\infty}$ bounds for $\rho$ but not on its regularity.

To apply the results in \cite{DF2} and \cite{DFS} one must assume that $\rho$ is supported in the whole space.
However, in physically interesting cases, the initial density is compactly supported. It is thus natural to ask what one can show about solutions
to ($\ref{DMA}$). Our construction shows that, even in two dimensions, one must rely more on the specific structure of the semigeostrophic
equation to obtain existence results for compactly supported initial data.

The idea of our construction is to start with a one-dimensional convex function of $x_2$ in the half-space $\{x_1 < 0\}$
whose second derivative has nontrivial Cantor part, and extend to a convex function on $\mathbb{R}^2$ which lifts from these values without generating too much Monge-Amp\`{e}re measure.
To accomplish this we start with a ``building block'' $v_1$ that agrees with $|x_2|$ in $\{|x_2| \geq (x_1)_+^{\alpha}\}$ for some $\alpha > 1$,
and in the cusp $\{|x_2| < (x_1)_+^{\alpha}\}$ grows with the homogeneity $$v_1(\lambda x_1,\lambda^{\alpha}x_2) = \lambda^{\alpha} v_1(x_1,x_2).$$
By superposing vertically-translated rescalings of (a smoothed version of) $v_1$ in a self-similar way, we obtain our example.

Our main theorem is:
\begin{thm}\label{Main}
 For all $n \geq 2$, there exist solutions to $(\ref{DMA})$ that are not $W^{2,1}$.
\end{thm}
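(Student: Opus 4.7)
The plan is to handle the two-dimensional case in earnest and bootstrap to $n \geq 3$. For $n\geq 3$ I would invoke the direct construction deferred to Section \ref{HighDim}, which produces solutions with Lipschitz singularities along a piece of a hyperplane. Alternatively, one can take a 2D counterexample $u(x_1,x_2)$ on a convex $\Omega_0$, form $\tilde u(x) := u(x_1,x_2) + \tfrac{1}{2}(x_3^2 + \dots + x_n^2)$, and restrict to a sub-level set so that $\det D^2 \tilde u = \det D^2 u \leq 1$ and $\tilde u$ vanishes on the boundary of a bounded convex domain.

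For $n=2$ I would follow the blueprint sketched in the introduction. The first technical step is the construction of the building block $v_1$: convex on $\mathbb{R}^2$, equal to $|x_2|$ outside the cusp $\{|x_2|<(x_1)_+^{\alpha}\}$, agreeing with $|x_2|$ in a $C^1$ fashion across the cusp boundary, and satisfying the homogeneity $v_1(\lambda x_1,\lambda^{\alpha}x_2) = \lambda^{\alpha}v_1(x_1,x_2)$. A natural ansatz inside the cusp is $v_1(x_1,x_2) = x_1^{\alpha}f(x_2/x_1^{\alpha})$; plugging this into $\det D^2 v_1 \leq 1$ yields an ODE for $f$ to be solved with the correct boundary data. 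A brief smoothing near $\partial\{|x_2|=(x_1)_+^{\alpha}\}$ then upgrades $v_1$ to some $w_1 \in C^2$ with $\det D^2 w_1 \leq 1$. The key scaling observation is that the Monge--Amp\`ere mass of the rescaling $w_1^{(\lambda)}(x) := \lambda^{\alpha}w_1(x_1/\lambda, x_2/\lambda^{\alpha})$ on its (rescaled) cusp scales like $\lambda^{\alpha - 1}$, which is summable for $\alpha > 1$.

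I would then assemble the example as a self-similar superposition. Choose points $\{y_{k,i}\}$ in $[-1,1]$ arranged in a Cantor-like pattern at generation $k$ (so $\sim 2^k$ points spaced by $\sim 3^{-k}$), select scales $\lambda_k \to 0$ small enough that all cusps are pairwise disjoint (even across generations), and set
\begin{equation*}
u_N(x_1,x_2) := \sum_{k=0}^{N}\sum_{i} \lambda_k^{\alpha}\, w_1\!\left(\frac{x_1}{\lambda_k},\, \frac{x_2 - y_{k,i}}{\lambda_k^{\alpha}}\right).
\end{equation*}
Outside its own cusp each summand equals $|x_2 - y_{k,i}|$, whose Hessian is supported on the line $x_2 = y_{k,i}$; together with disjointness of cusps this yields $\det D^2 u_N \leq 1$ a.e., while summability of masses produces a convex limit $u = \lim_N u_N$ with $\det D^2 u \leq 1$ in the Alexandrov sense by weak continuity of the Monge--Amp\`ere operator. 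Along any slice $\{x_1 = -\delta\}$ the function $u(-\delta,\cdot)$ is a sum of translated copies of $|x_2 - y_{k,i}|$ whose distributional second derivatives accumulate to a nontrivial Cantor-type singular measure, so $u_{x_2 x_2}$ is not absolutely continuous and hence $u \notin W^{2,1}$. A final step subtracts a suitable affine function and restricts to a sub-level set to realize $u$ as an Alexandrov solution on a bounded convex $\Omega$.

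The main obstacle is the delicate geometric bookkeeping in this superposition: the exponent $\alpha>1$ and the scales $\lambda_k$ must be chosen \emph{simultaneously} so that (i) all cusps are pairwise disjoint, so that the Monge--Amp\`ere measures of distinct summands have disjoint supports; (ii) the total Monge--Amp\`ere mass stays controlled in the limit; and (iii) the traces along $\{x_1 = -\delta\}$ of the summed $|x_2 - y_{k,i}|$'s still produce a nonzero singular Cantor part. Balancing these competing constraints, and then verifying rigorously that the limiting measure has nontrivial Cantor component, is the heart of the argument.
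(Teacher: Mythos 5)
Your overall architecture (a building block equal to $|x_2|$ outside an anisotropic cusp, a Cantor-type self-similar superposition of rescaled translates, and a slice argument to detect failure of $W^{2,1}$) matches the paper's, and your reduction of $n\geq 3$ to Section \ref{HighDim} or to a 2D example plus $\frac12|x'|^2$ is fine. But the two-dimensional argument has a gap at exactly the point the paper's Lemma \ref{KeyLemma} is designed to handle: your hypothesis (i), that the cusps can be made pairwise disjoint by choosing $\lambda_k$ small, is impossible. The cusp $\{|x_2-y|<(x_1)_+^{\alpha}\}$ is invariant under the anisotropic scaling $(x_1,x_2)\mapsto(\lambda x_1,\lambda^{\alpha}x_2)$, so shrinking $\lambda_k$ does not thin the cusps at any fixed $x_1>0$; moreover every building block's cusp must extend to $x_1=+\infty$ (a convex function equal to $|x_2-y|$ outside a region bounded in the $x_1$-direction, yet positive somewhere on the ray $\{x_2=y,\,x_1>0\}$, would be zero at both ends of a segment of that ray and positive in between). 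Hence all cusps based at heights $y_{k,i}\in[-1,1]$ necessarily overlap once $x_1^{\alpha}\gtrsim 1$, across all generations. In that overlap region ``disjoint supports'' is unavailable and Monge--Amp\`ere is not additive: a superposition of $N$ lifted blocks could a priori have determinant of order $N^2$ times that of one block. The paper's Key Lemma resolves precisely this by normalizing the $N$ translates as $v_N$ (each carrying slope $\sim 1/N$) and exploiting the exact identity (\ref{Invariance}), $v_{\lambda}=\frac{1}{\lambda}(x_1^{\alpha}+x_1^{-\alpha}x_2^2)$ in the invariant region, so that any partial sum of $M\leq N$ active blocks is again of this special form with coefficient $M/N$, and its determinant is bounded by $C(\alpha)$ independently of $M$, $N$ and the translations. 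Nothing in your proposal substitutes for this computation.

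There are also normalization errors that would sink the construction even before the overlap issue. Your rescaling $w_1^{(\lambda)}(x)=\lambda^{\alpha}w_1(x_1/\lambda,x_2/\lambda^{\alpha})$ is not determinant-preserving: it multiplies $\det D^2$ by $\lambda^{-2}$, so a single rescaled block already violates any fixed density bound as $\lambda_k\to 0$; and the smallness of the total Monge--Amp\`ere mass ($\sim\lambda^{\alpha-1}$) does not help, because (\ref{DMA}) requires an $L^{\infty}$ bound on the density, not an $L^1$ bound, and weak convergence only transmits a density bound that each finite stage satisfies uniformly. In addition, outside its own cusp your rescaled block equals $2|x_2-y_{k,i}|$ with slope independent of $\lambda_k$, so your series (with $\sim 2^{k}$ unit-size terms at generation $k$) diverges pointwise, and its slices could not converge to a Cantor-type profile. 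The paper's rescaling $v_{\lambda}=\lambda^{-(1+\alpha)}v_1(\lambda x_1,\lambda^{\alpha}x_2)$ is chosen exactly so that the determinant is preserved and the outer slope scales like $1/\lambda$, which is what makes both convergence and the Cantor-function limit of $\partial_2 u_k(0,\cdot)$ (jumps of size $2^{1-k}$) work. Finally, realizing the limit as a solution of (\ref{DMA}) on a bounded convex domain with zero boundary data requires bounded sublevel sets; this is not automatic and is the content of the separation estimate (\ref{Separation}), the second half of the Key Lemma, which your ``restrict to a sub-level set'' step glosses over.
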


\begin{rem}
 It is obvious that solutions to ($\ref{DMA}$) in one dimension are $C^{1,1}$.
\end{rem}

\begin{rem}
 In our examples, the support of $\rho$ is irregular. In particular, in the higher-dimensional examples, the support of $\rho$ is a cusp revolved around an axis,
 and in the two-dimensional example, the support of $\rho$ has a very irregular ``fractal'' geometry.

 In e.g. \cite{DS} and \cite{G} the authors obtain interesting regularity results when $\rho$ degenerates in a specific way, motivated by applications
 to prescribed Gauss curvature.
\end{rem}

Our second result concerns the behavior of solutions to $(\ref{DMA})$ near a single line segment.
Since Lipschitz singularities propagate, $D^2u$ cannot concentrate on a line segment.
(In our two-dimensional counterexample to $W^{2,1}$ regularity, $D^2u$ concentrates on a family of horizontal rays.)
On the other hand, by modifying an example in \cite{W} one can construct, for any $\epsilon > 0$, a solution to $(\ref{DMA})$
that grows like $|x_2|/|\log x_2|^{1 + \epsilon}$, with second derivatives not in $L\log^{1 + \epsilon} L$ (see Section \ref{Propagation}).

It is natural to ask whether one can take $\epsilon \leq 0$. We show that this is not possible. Indeed, we construct a family of barriers that agree with $|x_2|/|\log x_2|$ away from arbitrarily thin cusps
around the $x_1$ axis, where we can make the Monge-Amp\`{e}re measure as large as we like. By sliding these barriers we prove that singularities of the form $|x_2|/|\log x_2|$ propagate.
Our second theorem is:

\begin{thm}\label{LogarithmicPropagation}
 Assume that $u$ is convex on $\mathbb{R}^2$ and that $\det D^2u \leq 1$. Then if $u(0) = 0$ and
 $u \geq c|x_2|/|\log x_2|$ in a neighborhood of the origin for some $c > 0$, then $u$ vanishes on the $x_1$ axis.
\end{thm}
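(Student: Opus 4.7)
The plan is a proof by contradiction using the sliding-barrier strategy foreshadowed in the introduction. Suppose $u$ does not vanish identically on the $x_1$ axis. Since $u(0) = 0$ and $u \geq c|x_2|/|\log|x_2|| \geq 0$ near $0$, the origin is a local---hence global, by convexity---minimum of $u$, so $u \geq 0$ on all of $\mathbb{R}^2$; after a reflection in $x_1$ we may assume there exists $\bar x_1 > 0$ with $u(\bar x_1, 0) > 0$. The goal is to construct convex barriers $w_\delta$ that agree with $c|x_2|/|\log|x_2||$ outside an arbitrarily thin cusp $\{|x_2| < \phi_\delta(x_1)\}$ around the $x_1$ axis and satisfy $\det D^2 w_\delta \geq 1$ inside the cusp; sliding translates of $w_\delta$ along the $x_1$ axis and applying Alexandrov's Monge-Amp\`{e}re comparison principle will produce a mass inequality contradicting $\det D^2 u \leq 1$.

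For the barrier, inside the cusp I take the quadratic-in-$x_2$ interpolant that matches $c|x_2|/|\log|x_2||$ and its $x_2$-derivative at $|x_2| = \phi_\delta(x_1)$, namely
\[
w_\delta(x_1, x_2) = \frac{c}{2|\log\phi_\delta(x_1)|}\!\left(\frac{x_2^2}{\phi_\delta(x_1)} + \phi_\delta(x_1)\right), \qquad |x_2| \leq \phi_\delta(x_1),
\]
extended by $c|x_2|/|\log|x_2||$ outside the cusp; $w_\delta$ is $C^1$ across $|x_2| = \phi_\delta(x_1)$ by construction, with $\partial_{22}w_\delta = c/(\phi_\delta|\log\phi_\delta|)$ large for narrow cusps. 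Choosing $\phi_\delta$ with sufficiently rapid decay (a stretched-exponential profile is the natural candidate) forces the $\partial_{11}$ contribution---from differentiating $1/\phi_\delta$ inside the coefficient---to dominate $(\partial_{12})^2$, yielding both global convexity of $w_\delta$ and $\det D^2 w_\delta \geq 1$ a.e.\ inside the cusp. Outside the cusp, $w_\delta$ depends only on $x_2$, so its Monge-Amp\`{e}re measure vanishes.

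For the comparison, consider horizontal translates $w_\delta^t := w_\delta(\cdot - (t, 0))$ with an affine tilt chosen so that the resulting $\tilde w_\delta^t$ satisfies $\tilde w_\delta^t \leq u$ on the boundary of the translated cusp---at $t = 0$ this uses the hypothesis $u \geq c|x_2|/|\log|x_2||$ near $0$, and at later $t$ it uses convexity together with the assumed positivity $u(\bar x_1, 0) > 0$. Alexandrov's comparison principle applied inside the cusp (where $\det D^2 \tilde w_\delta^t \geq 1 \geq \det D^2 u$) then yields $\tilde w_\delta^t \leq u$ in the cusp interior. Running $t$ continuously from $0$ toward $\bar x_1$, either this comparison persists all the way---so that the lower bound on $u$ at $(\bar x_1, 0)$ can be made strictly larger than $u(\bar x_1, 0)$ itself by optimizing the tilt as $\delta \to 0$, a contradiction---or it breaks down at a first value $t^* < \bar x_1$ where $\tilde w_\delta^{t^*}$ touches $u$ tangentially at some interior cusp point $x_0$; the local Alexandrov maximum principle applied to $u - \tilde w_\delta^{t^*}$ on a small ball around $x_0$ then gives $\int \det D^2 u \geq \int \det D^2 w_\delta \geq |\text{cusp slice}|$, which together with $\det D^2 u \leq 1$ and a careful accounting of cusp area as $\delta \to 0$ produces the contradiction. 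Extension from local vanishing near $0$ to vanishing on the entire $x_1$ axis follows by iterating the argument at the endpoint of the currently-known zero segment, using that $u \equiv 0$ on such a segment together with $\det D^2 u \leq 1$ creates enough singular structure at its endpoints to rerun either the above argument or the classical Alexandrov propagation for Lipschitz singularities.

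The hardest step will be the barrier construction: verifying $\partial_{11}\partial_{22} \geq (\partial_{12})^2 + 1$ pointwise in the cusp. All three entries of $D^2 w_\delta$ involve negative powers of $\phi_\delta$ and its derivatives, and all are large in the narrow-cusp limit, so one must select $\phi_\delta$ from an appropriately calibrated one-parameter family to balance them with room to spare. Once this delicate but self-contained calculation is settled, the sliding-comparison step is a standard adaptation of Alexandrov's classical Lipschitz-propagation argument.
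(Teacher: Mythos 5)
Your barrier family is essentially the right one: with a stretched-exponential profile $\phi_\delta(x_1)\sim e^{-1/x_1^{\alpha}}$, your quadratic-in-$x_2$ interpolant is, up to constants, exactly the paper's barrier $x_1^{\alpha}e^{-1/x_1^{\alpha}}+x_1^{\alpha}e^{1/x_1^{\alpha}}x_2^2$, and the deferred computation does go through (one gets $\det D^2 b\gtrsim \alpha^2 x_1^{-2}$ in the cusp, which can be made as large as one likes). But two points there need repair. First, the gluing is not $C^1$ across the curved interface $\{|x_2|=\phi_\delta(x_1)\}$: inside, $\partial_1 w_\delta = c\,\phi_\delta'/(\log\phi_\delta)^2\neq 0$ while outside $\partial_1=0$, and even $\partial_2$ mismatches by $c/(\log\phi_\delta)^2$; what one must (and can) check instead is that the gradient jump across the interface has the monotone sign compatible with convexity, which is exactly how the paper argues. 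Second, after you normalize the constant $c$ in the growth hypothesis to a fixed value on a fixed domain, the bound $\det D^2u\leq 1$ becomes $\det D^2u\leq\Lambda$ for a large $\Lambda$, so the barriers must carry arbitrarily large Monge-Amp\`{e}re measure in the cusp, not merely $\geq 1$; this is why the cusp parameter must eventually be sent to infinity.

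The genuine gap is in the sliding/comparison logic. Your claim that for later $t$ the tilted barrier stays below $u$ on the lateral cusp boundary ``by convexity together with $u(\bar x_1,0)>0$'' is unjustified: the hypothesis $u\geq c|x_2|/|\log x_2|$ is assumed only in a neighborhood of the origin, and positivity of $u$ at one axis point gives no such lower bound at intermediate values of $x_1$. Moreover, neither of your two contradiction mechanisms works as stated: the barrier is uniformly tiny inside the cusp (that is its purpose), so no affine tilt consistent with the boundary inequality can produce a lower bound at $(\bar x_1,0)$ exceeding $u(\bar x_1,0)>0$; and at a first tangential interior touching the correct conclusion is simply that such touching cannot occur when the barrier has strictly larger Monge-Amp\`{e}re measure (comparison principle) --- the asserted inequality $\int\det D^2u\geq\int\det D^2w_\delta\geq|\text{cusp slice}|$ does not follow from a touching point and would not contradict $\det D^2u\leq\Lambda$ anyway, since the cusp has tiny area. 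The missing idea is that the contradiction point is the origin itself: if $u$ exceeded the (exponentially small) barrier everywhere on the right edge of the cusp at $x_1=1/2$, then sliding the barrier slightly to the left keeps $u\geq$ barrier on the whole relevant boundary, so comparison forces $u\geq$ barrier inside the cusp, contradicting $u(0)=0$ being smaller than the translated barrier's positive value at the origin. Hence $u$ is below the barrier somewhere on the right edge, and letting the cusp parameter tend to infinity gives $u(1/2,0)=0$. Finally, the extension to the whole axis requires propagating the $|x_2|/|\log x_2|$ lower bound to a neighborhood of each new zero (this follows from convexity, e.g.\ by comparing values along rays emanating from the origin, at the cost of a smaller constant); your appeal to the classical Alexandrov propagation of Lipschitz singularities is not available here, since the singularity is strictly sublinear --- that is precisely the regime the theorem addresses.
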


\begin{rem}
 Note that we assume the growth in a neighborhood of $0$. For a Lipschitz singularity it is enough to assume the growth at a point, which
 automatically extends to a neighborhood by convexity. (See e.g. \cite{FL} for a short proof that Lipschitz singularities propagate.)
\end{rem}

\begin{rem}
Theorem \ref{LogarithmicPropagation} shows, after taking a Legendre transform, that a solution to $\det D^2u \geq 1$ in two dimensions cannot separate from a tangent plane more slowly than $r^{2}e^{-\frac{1}{r}}$ in any fixed direction. 
This quantifies the classical result that such functions are strictly convex.
\end{rem}

The paper is organized as follows. In Section \ref{HighDim} we construct simple examples of solutions to ($\ref{DMA}$) in the case $n \geq 3$ which
have a Lipschitz singularity on a hyperplane.
In Section \ref{MainExample} we construct a solution to ($\ref{DMA}$) in two dimensions whose second derivatives have nontrivial Cantor part. This proves
Theorem \ref{Main}.
In Section \ref{Propagation} we first construct examples showing that Theorem \ref{LogarithmicPropagation} 
is optimal. We then construct barriers related to these examples. Finally, we use the barriers to prove Theorem \ref{LogarithmicPropagation}. 
\section{The Case $n \geq 3$}\label{HighDim}

In this section we construct simple examples of solutions to ($\ref{DMA}$) in three and higher dimensions that have a Lipschitz singularity on a hyperplane. 
Denote $x \in \mathbb{R}^n$ by $(x',x_n)$ and let $r = |x'|$. More precisely, we show:

\begin{prop}\label{HigherDimEx}
 In dimension $n \geq 3$, for any $\alpha \geq \frac{n}{n-2}$ there exists a solution to $(\ref{DMA})$ that is a positive multiple of $|x_n|$
 in $\{|x_n| \geq (r-1)_+^{\alpha}\}$.
\end{prop}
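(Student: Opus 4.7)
The plan is to build a convex $u$ on $\mathbb{R}^n$ that equals a positive multiple of $|x_n|$ outside a cusp opening along the circle $\{r = 1,\ x_n = 0\}$, while inside the cusp $u$ is given by a self-similar ansatz. The Lipschitz singularity of $c|x_n|$ is thereby preserved on the disk $\{r \le 1,\ x_n = 0\}$, and all of the Monge-Amp\`ere mass is pushed into the cusp.

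Inside the cusp $\{r > 1,\ |x_n| < (r-1)^\alpha\}$ I would take
\[
u(x) \;=\; c\,(r-1)^\alpha F\!\Bigl(\tfrac{x_n}{(r-1)^\alpha}\Bigr),
\]
for $F\colon[-1,1]\to\mathbb{R}$ smooth, even, and convex with $F(1) = F'(1) = 1$ (the simplest choice being $F(s) = (1+s^2)/2$). These boundary values for $F$ guarantee that $u$ is $C^1$ across the cusp surface $\{|x_n| = (r-1)^\alpha\}$, so no singular Monge-Amp\`ere mass concentrates along that surface, and the whole Alexandrov measure is absolutely continuous with density $\det D^2 u$.

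The central computation is $\det D^2 u$ in the cusp. Using the radial symmetry in $x'$, the eigenvalues of $D^2 u$ are $(n-2)$ copies of $u_r/r$ together with the two eigenvalues of the $2\times 2$ Hessian in $(r, x_n)$. Substituting the ansatz with $\xi = r-1$ and $s = x_n/\xi^\alpha$ gives $u_r = c\xi^{\alpha-1}P(s)$, $u_{rr} = c\xi^{\alpha-2}Q(s)$, $u_{rx_n} = c\xi^{-1}P'(s)$, $u_{x_n x_n} = c\xi^{-\alpha}F''(s)$, for certain polynomials $P$ and $Q$ in $F$ and $F'$. This leads to
\[
\det D^2 u \;=\; \frac{c^n}{r^{n-2}}\,\xi^{(\alpha-1)(n-2) - 2}\,P(s)^{n-2}\bigl[Q(s)F''(s) - P'(s)^2\bigr].
\]
The scaling exponent $(\alpha-1)(n-2) - 2$ is non-negative precisely when $\alpha \ge n/(n-2)$, which is the hypothesis. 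Once this exponent is non-negative, the bracketed factor is bounded and non-negative on $[-1,1]$ (directly checkable for $F(s) = (1+s^2)/2$), so choosing $c > 0$ small yields $\det D^2 u \le 1$ throughout the cusp.

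The remaining items are routine: $c|x_n|$ has degenerate Hessian outside the cusp; global convexity follows from the $C^1$ matching, convexity of $F$, and convexity of $(r-1)_+^\alpha$ in $x'$ for $\alpha \ge 1$; and the Lipschitz ridge $\{r \le 1,\ x_n = 0\}$ contributes no Monge-Amp\`ere mass since its subdifferential is one-dimensional. A bounded convex $\Omega$ with zero Dirichlet data is then obtained as a suitable normalized sublevel set. The main obstacle is picking the correct self-similar ansatz and unwinding the Hessian in the cusp geometry; once those are in hand, the critical exponent $\alpha = n/(n-2)$ is forced by the balance between the $(n-2)$ radial eigenvalues of size $\xi^{\alpha-1}$ and the $(r, x_n)$ block whose determinant scales like $\xi^{-2}$.
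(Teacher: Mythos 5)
Your construction is essentially the same as the paper's. Inside the cusp the paper takes $u(r,x_n) = h(r)^\alpha + h(r)^{-\alpha}x_n^2$ with $h(r)=(r-1)_+$, which is exactly your ansatz $c\,\xi^\alpha F(x_n/\xi^\alpha)$ with $F(s)=(1+s^2)/2$ and $c=2$, and the paper's explicit formula $\det D^2u = 2\alpha^{n-1}(\alpha-1)h^{\alpha(n-2)-n}(1-(x_n/h^\alpha)^2)^{n-1}/r^{n-2}$ confirms both your scaling exponent $(\alpha-1)(n-2)-2 = \alpha(n-2)-n$ and the sign and boundedness of the $s$-dependent bracket, so your verification of convexity, the $C^1$ matching, the zero-measure subdifferential on the ridge $\{r\le 1,\ x_n=0\}$, and the small-$c$ normalization all track the paper's argument.
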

\begin{proof}
Let $h(r) = (r-1)_+$.  We search for a convex function $u = u(r,x_n)$ in  $\{|x_n| < h(r)^{\alpha}\}$, with $\alpha > 1$, that glues ``nicely'' across the boundary to $|x_n|$.
To that end we look for a function with the homogeneity 
$$u(1+\lambda t,\lambda^{\alpha} x_n) = \lambda^{\alpha}u(1 + t, x_n),$$ 
so that $u_n$ is invariant under the rescaling. Let
$$u(r,x_n) = \begin{cases}
                  h(r)^{\alpha} + h(r)^{-\alpha}x_n^2, \quad |x_n| < h(r)^{\alpha} \\
                  2|x_n|, \quad |x_n| \geq h(r)^{\alpha}.
                 \end{cases}
$$
Then $\nabla u$ is continuous on $\partial \{|x_n| < h(r)^{\alpha}\} \backslash \{r = 1, \, x_n = 0\}$. Furthermore,
$\partial u|_{\{r=1,\,x_n = 0\}}$ is the line segment between $\pm 2 e_n$, which has measure zero. Thus, in the Alexandrov sense, $\det D^2u$ can be computed piecewise. 
In the cylindrical coordinates $(r,x_n)$ one easily computes
$$\det D^2u = \begin{cases}
               \frac{2\alpha^{n-1}(\alpha-1)h(r)^{\alpha(n-2) - n}\left(1 - \left(\frac{x_n}{h(r)^{\alpha}}\right)^2\right)^{n-1}}{r^{n-2}},
               \quad |x_n| < h(r)^{\alpha} \\
               0, \quad |x_n| \geq h(r)^{\alpha}.
              \end{cases}
$$
For $\alpha \geq \frac{n}{n-2}$ the right hand side is locally bounded.
\end{proof}

\begin{rem}
 The bound on $\alpha$ can be understood by looking at the gradient map of $u$, which takes a ``ring'' of volume like $h(r)^{1+\alpha}$
 to a ``football'' of length of order $1$ and radius of order $h(r)^{\alpha - 1}$ (see Figure \ref{GradMap}). Then impose that it decreases volume.
\end{rem}

\begin{figure}
 \centering
    \includegraphics[scale=0.35]{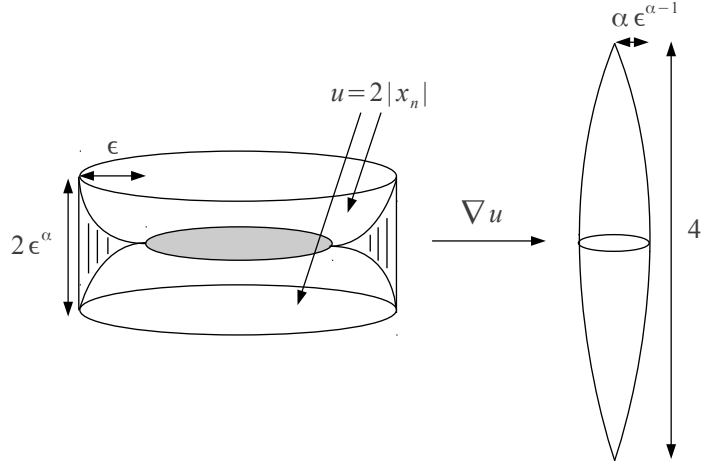}
 \caption{The gradient map of $u$ decreases volume if $\alpha \geq \frac{n}{n-2}$.}
\label{GradMap}
\end{figure}

\begin{rem}
 Observe that $\det D^2u$ grows like $\text{dist.}^{n-2-\frac{n}{\alpha}}$ from its zero set. This is in a sense optimal; if
 $\det D^2u < C|x_n|^{n-2}$ then one can modify Alexandrov's two-dimensional argument to show that the singularity has no extremal points.
\end{rem}

\section{The Case $n = 2$}\label{MainExample}
In this section we prove the main Theorem \ref{Main}. We construct our example in several steps.

First, let $g(t)$ be a smooth, convex function such that $g(t) = 1/2$ for $t \leq 0$ and $g(t) = t^{\alpha}$ for $t \geq 1$, where
$\alpha > 1$. Then define
$$v_1(x_1,x_2) = \begin{cases}
                  g(x_1) + \frac{1}{g(x_1)}x_2^2, \quad |x_2| < g(x_1), \\
                  2|x_2|, \quad |x_2| \geq g(x_1). 
                 \end{cases}
$$
It is easy to check that $v_1$ is a $C^{1,1}$ convex function, and in the Alexandrov sense,
$$\det D^2v_1(x_1,x_2) = \begin{cases}
                 2\frac{g''(x_1)}{g(x_1)}\left(1 - \frac{x_2^2}{g(x_1)^2}\right), \quad |x_2| < g(x_1), \\
                 0, \quad |x_2| \geq g(x_1).
                \end{cases}
$$
In particular, $\det D^2 v_1$ is bounded, and decays like $x_1^{-2}$ for $x_1$ large. Let $v_{\lambda}$ be the rescalings defined by
$$v_{\lambda}(x_1,x_2) = \frac{1}{\lambda^{1 + \alpha}}v_1(\lambda x_1, \lambda^{\alpha} x_2).$$
Observe that
$$\det D^2v_{\lambda}(x_1,x_2) = \det D^2v_1(\lambda x_1,\lambda^{\alpha} x_2),$$
and we have
\begin{equation}\label{Invariance}
 v_{\lambda} = \frac{1}{\lambda}(x_1^{\alpha} + x_1^{-\alpha}x_2^2) \quad \text{ in } \quad \{x_1 \geq \lambda^{-1}\} \cap \{|x_2| \leq x_1^{\alpha}\}.
\end{equation}

In the following key lemma we show that any superposition of $\lambda$ vertically-translated copies of $v_{\lambda}$ has bounded Monge-Amp\`{e}re measure in $\{x_1 > 1/2\}$, and separates from
its tangent planes when we step away from the $x_2$ axis.

\begin{lem}\label{KeyLemma}
 Let $\{x_{2,i}\}_{i = 1}^N$ be fixed numbers with $|x_{2,i}| \leq 1$ for all $i$, where $N$ is any positive integer. Let
 $$w(x_1,x_2) = \sum_{i = 1}^N v_{N}(x_1,x_2-x_{2,i}).$$
 Then
\begin{equation}\label{DetBound}
  \det D^2w < C(\alpha) \quad \text{ in } \quad \{x_1 > 1/2\}
\end{equation}
  for some $C(\alpha)$ independent of $N$ and the choice of $\{x_{2,i}\}$, and
\begin{equation}\label{Separation}
 w(2,x_2) > w(0,x_2) + \mu(\alpha) \quad \text { for all } \quad |x_2| < 1,
\end{equation}
  for some $\mu(\alpha) > 0$ independent of $N$ and the choice of $\{x_{2,i}\}$.
\end{lem}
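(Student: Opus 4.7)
The plan is to exploit the explicit formula (\ref{Invariance}) throughout $\{x_1 > 1/2\}$, to estimate each shifted copy $v_N^{(i)}(x_1, x_2) := v_N(x_1, x_2 - x_{2,i})$ separately, and then sum. For $N \geq 2$ and $x_1 > 1/2$ one has $x_1 \geq 1/N$, so each $v_N^{(i)}$ is given by $\frac{1}{N}(x_1^\alpha + x_1^{-\alpha}(x_2-x_{2,i})^2)$ on its cusp $\{|x_2-x_{2,i}| \leq x_1^\alpha\}$ (by (\ref{Invariance})) and by the affine function $\frac{2}{N}|x_2 - x_{2,i}|$, with zero Hessian, outside. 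The case $N = 1$ of (\ref{DetBound}) is immediate since $\det D^2 v_1 \in L^\infty$.

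For (\ref{DetBound}) I would differentiate the cusp formula to get $\partial_{22} v_N^{(i)} = 2/(N x_1^\alpha)$ and, using $(x_2 - x_{2,i})^2 \leq x_1^{2\alpha}$ on the cusp, $\partial_{11} v_N^{(i)} \leq C(\alpha) x_1^{\alpha-2}/N$. Since at any point at most $N$ of the copies are active, summing over $i$ gives $\partial_{22} w \leq 2 x_1^{-\alpha}$ and $\partial_{11} w \leq C(\alpha) x_1^{\alpha-2}$. Convexity of $w$ then yields $\det D^2 w \leq \partial_{11} w \cdot \partial_{22} w \leq 2C(\alpha) x_1^{-2}$, uniformly bounded in $\{x_1 > 1/2\}$. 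The delicate point is that $\det D^2$ is not additive: the individual bounds $\det D^2 v_N^{(i)} = O(1/N^2)$ do not sum in any obvious way, but passing to the diagonal Hessian entries (where the $1/N$ scaling cancels the $N$ overlapping cusps) and invoking the convexity inequality $\det \leq \partial_{11}w \cdot \partial_{22}w$ makes the bound linear and removes the $N$-dependence.

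For (\ref{Separation}) I would prove a uniform term-by-term lower bound $v_N^{(i)}(2, x_2) - v_N^{(i)}(0, x_2) \geq \mu_0(\alpha)/N$. Setting $a_i = x_2 - x_{2,i}$, so $|a_i| \leq 2 < 2^\alpha$, (\ref{Invariance}) gives $v_N^{(i)}(2, x_2) = \frac{1}{N}(2^\alpha + 2^{-\alpha} a_i^2)$, while $v_N^{(i)}(0, x_2)$ equals $2|a_i|/N$ when $|a_i| \geq 1/(2N^\alpha)$ and is at most $1/N^{1+\alpha}$ otherwise. In the first regime the completed-square identity
\begin{equation*}
2^\alpha + 2^{-\alpha} s^2 - 2s = 2^{-\alpha}(s - 2^\alpha)^2,
\end{equation*}
evaluated at $s = |a_i| \leq 2$, yields a lower bound of $2^{-\alpha}(2^\alpha - 2)^2/N$; in the second regime a direct computation gives $\geq (2^\alpha - 1)/N$. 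Both are strictly positive for $\alpha > 1$, so summing $N$ such bounds produces a constant $\mu(\alpha) > 0$ depending only on $\alpha$. Because the estimate is strictly term-by-term, it is insensitive to any clustering of the shifts $\{x_{2,i}\}$, which is the main potential pitfall here.
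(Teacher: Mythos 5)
Your proof is correct, and while it rests on the same core observations as the paper's --- the explicit scaling form (\ref{Invariance}) for $x_1 > 1/2$, and the cancellation between the $1/N$ prefactor of each translate and the $M \le N$ terms that are non-affine near a generic point --- it closes both estimates by a different final step. For (\ref{DetBound}) the paper collects the $M$ active translates into a single quadratic expression near $p$ (after discarding the measure-zero set of gluing curves, permissible since $w$ is $C^{1,1}$) and computes $\det D^2w(p)$ exactly, extracting the factor $M^2/N^2 \le 1$; you instead bound the diagonal entries $\partial_{11}w$ and $\partial_{22}w$ separately, each of size $O\bigl((M/N)\,x_1^{\alpha-2}\bigr)$ resp.\ $O\bigl((M/N)\,x_1^{-\alpha}\bigr)$, and invoke $\det D^2w \le \partial_{11}w\,\partial_{22}w$ for the positive semidefinite Hessian. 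This never needs the full quadratic form and is a touch slicker; the paper's exact computation gives sharper information at the cost of carrying the cross term. For (\ref{Separation}) the paper avoids evaluating at $x_1 = 0$ at all, using monotonicity of $v_N$ in $x_1$ to replace $v_N(0,\cdot)$ by $v_N(2^{1/\alpha},\cdot)$, which still lies inside the scaling region, and then computes the difference $v_N(2,\cdot) - v_N(2^{1/\alpha},\cdot)$ via (\ref{Invariance}). You instead evaluate $v_N(0,a_i)$ directly from the explicit formula for $v_1(0,\cdot)$ (either $\tfrac12 + 2x_2^2$ or $2|x_2|$, according to whether $|a_i| < 1/(2N^\alpha)$), arriving at the same constant $\mu = 2^{-\alpha}(2^\alpha - 2)^2$ via the completed square. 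Both routes are rigorous; the monotonicity trick trims the case split, while your direct evaluation makes the origin of $\mu$ more transparent.
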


\begin{proof}
{\bf Proof of (\ref{DetBound}):} Since $\det D^2v_1$ is bounded we may assume that $N \geq 2$. Let $p = (p_1,p_2) \in \{x_1 > 1/2\}$.
Since $w$ is $C^{1,1}$, the curves $p_2 = x_{2,i} \pm p_1^{\alpha}$ don't contribute anything to $\det D^2w$, so we may assume that
$p_2 \neq x_{2,i} \pm p_1^{\alpha}$ for any $i$. Then in a neighborhood of $p$, a subset
of $M \leq N$ of the translates are not linear, and all are linear if in addition $|p_2| > 1 + p_1^{\alpha}$. 
Up to relabeling the indices and subtracting a linear function of $x_2$, by (\ref{Invariance}) we can write
$$w = \frac{M}{N} \left(x_1^{\alpha} + x_1^{-\alpha}\left(x_2^2 - 2x_2\frac{\sum_{i = 1}^M x_{2,i}}{M} + \frac{\sum_{i = 1}^M x_{2,i}^2}{M}\right)\right)$$
in a neighborhood of $p$.
Since $|x_{2,i}| \leq 1$, one easily computes that
$$\det D^2w(p) \leq 2\alpha\frac{M^2}{N^2}p_1^{-2}\left(\alpha - 1 + (\alpha + 1)p_1^{-2\alpha} (p_2^2 + 2|p_2| + 1)\right),$$
and $\det D^2w(p) = 0$ if $|p_2| > 1 + p_1^{\alpha}$. We conclude that
$$\det D^2w(p) < C(\alpha),$$
where $C(\alpha)$ does not depend on $N$.

{\bf Proof of (\ref{Separation}):} Since $v_1$ is monotone increasing in the $e_1$ direction, we have for $|x_2| \leq 2$ that
$$v_1(2,x_2) - v_1(0,x_2) \geq v_1(2,x_2) - v_1(2^{1/\alpha},x_2) \geq 2^{-\alpha}(2^{\alpha} - 2)^2.$$
Since $\alpha > 1$, the lower bound $\mu := 2^{-\alpha}(2^{\alpha} - 2)^2$ is strictly positive.

By (\ref{Invariance}) the same argument gives
$$v_{N}(2,x_2) - v_{N}(0,x_2) > \frac{\mu}{N}$$
for $|x_2| \leq 2$. Finally, since $|x_{2,i}| \leq 1$, we have for $|x_2| < 1$ that
$$\sum_{i = 1}^N [v_N(2,x_2-x_{2,i}) - v_N(0,x_2-x_{2,i})] \geq \sum_{i = 1}^N \frac{\mu}{N} = \mu > 0,$$
completing the proof.
\end{proof}

We can now complete the construction. Roughly, at stage $k$ we superpose $2^{k+1}$ vertical translations of $v_{2^{k+1}}$, starting at the endpoints of the intervals 
removed up to the $k^{th}$ stage in the construction of the Cantor set.

\begin{proof}[{\bf Proof of Theorem \ref{Main}}]
Fix
$$\alpha := \frac{\log 3}{\log 2},$$ 
and define
$$u_1(x_1,x_2) = \sum_{i = 0}^3 v_4(x_1, x_2 - 1 + 2i/3).$$
Then $u_1$ is a piecewise linear function of $x_2$ outside of four equally spaced cusps in $\{x_1 > 0\}$ connected to thin strips in $\{x_1 < 0\}$ (see Figure \ref{Cantor1}).

\begin{figure}
 \centering
    \includegraphics[scale=0.35]{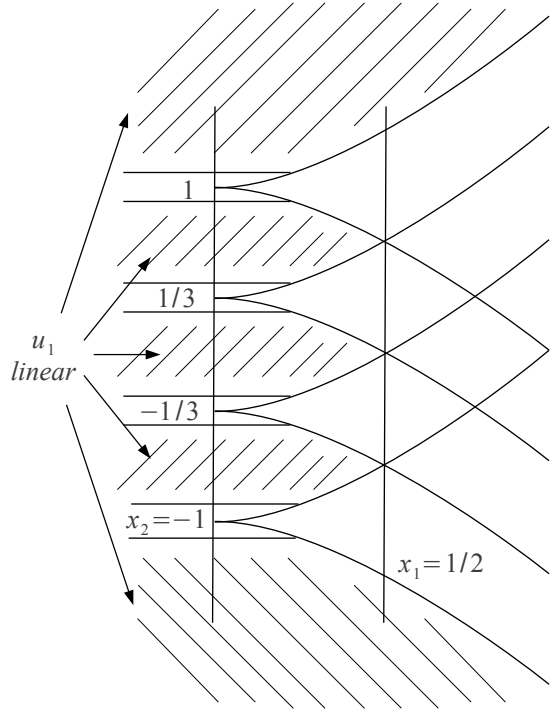}
 \caption{The function $u_1$ is a piecewise linear function of $x_2$ outside of the four equally spaced cusps between $x_2 = -1$ and $x_2 = 1$.}
\label{Cantor1}
\end{figure}

Define $u_k$ inductively by
$$u_{k+1}(x_1,x_2) = \frac{1}{2^{1 + \alpha}}\left[u_k(2x_1, 3(x_2 + 2/3)) + u_k(2x_1, 3(x_2-2/3))\right].$$

We first claim that $\det D^2u_k$ are uniformly bounded (in $k$) in $\{x_1 > 1/2\}$. Indeed, each $u_k$ is a sum of $2^{k+1}$ vertical translates
of $v_{2^{k+1}}$ by values in $[-1,1]$, so this follows from ($\ref{DetBound}$).

Next we show that $\det D^2u_k$ are uniformly bounded in $\mathbb{R}^2$. Note that $u_k$ are linear functions of $x_2$ in
$\{x_1 \leq 1\} \times \{|x_2| > 2\}$, so in $\{x_1 \leq 1/2\}$, the rescaled copies of $u_k$ in the definition of $u_{k+1}$ are linear
where the other is nontrivial (the determinants ``don't interact'', see Figure \ref{Cantor2}). Since the rescaling $2^{-(1+\alpha)}u_k(2x_1,3x_2)$ preserves Hessian determinants, we conclude that
$$\det D^2u_{k+1}|_{\{x_1 \leq 1/2\}} \leq \sup_{x_1 \geq 0} \det D^2u_k.$$
One easily checks that $\det D^2 u_1$ is bounded, so the claim follows by induction.

\begin{figure}
 \centering
    \includegraphics[scale=0.35]{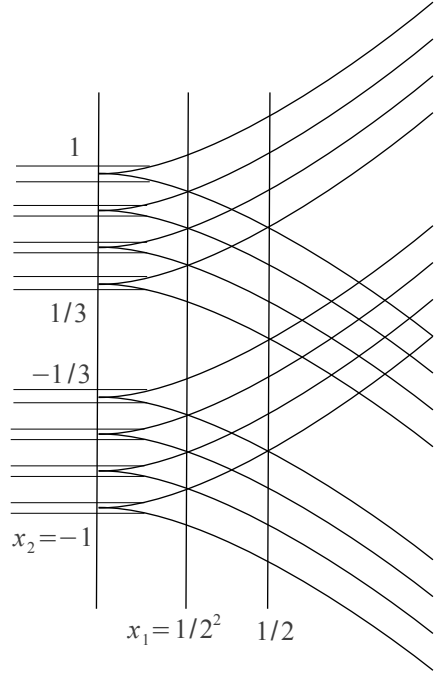}
 \caption{The function $u_2$ is obtained by superposing two rescaled copies of $u_1$, whose Hessians don't affect each other in $\{x_1 \leq 1/2\}$.}
 \label{Cantor2}
\end{figure}

Since $|v_{\lambda}|,\,|\nabla v_{\lambda}| < C\frac{R^{\alpha}}{\lambda}$ in $B_R$, the 
functions $u_k$ are locally uniformly Lipschitz and bounded and thus converge locally uniformly to some $u_{\infty}$. 
The right hand sides $\det D^2u_k$ converge weakly to $\det D^2u_{\infty}$ (see \cite{Gut}), so 
$$\det D^2u_{\infty} < \Lambda < \infty$$
in all of $\mathbb{R}^2$.

Finally, let 
$$u(x_1,x_2) = u_{\infty}((|x_1|-1)_+, x_2)$$
be the function obtained by translating $u_{\infty}$ to the right and reflecting over the $x_2$ axis. 

It is clear that $u$ is even in $x_1$ and $x_2$, and is a one-dimensional function $f(x_2)$ in the strip $\{|x_1| < 1\}$. It is easy to show
that $f'$ is the standard Cantor function (appropriately rescaled), so $f''$ has a nontrivial Cantor part. Indeed, $\partial_2u_k(0,\cdot)$ jumps by $2^{1-k}$ over
each of $2^{k+1}$ intervals of length $3^{-(k+1)}$ centered at the endpoints of the sets removed in the construction of the Cantor set. 
By (\ref{Separation}) we also have
$$u(\pm 2,0) > u(0,0) + \mu.$$
Since $u$ is even over both axes we conclude that
$$\{u < u(0,0) + \mu\} \subset [-2,2] \times [-C,C].$$
By convexity, $u$ has bounded sublevel sets, completing the proof.
\end{proof}
\section{A Propagation Result}\label{Propagation}
The second derivatives of a solution to $(\ref{DMA})$ cannot concentrate on a single line segment, since Lipschitz singularities propagate. (Compare to the example above,
where the second derivatives concentrate on a family of horizontal rays.)
In this section we investigate more closely how the second derivatives of a solution to $(\ref{DMA})$ can behave near a single line segment.

We first construct, for any $\epsilon > 0$, examples that grow from the origin like $|x_2|/|\log x_2|^{1+\epsilon}$,
with $D^2u$ not in $L\log^{1+\epsilon} L$. We then construct a family of barriers related to these examples in the case $\epsilon = 0$. Finally, we
use these barriers to prove that singularities of the form $|x_2|/|\log x_2|$ propagate.

\subsection{Examples that Grow Logarithmically Slower than Lipschitz}
\begin{prop}\label{LogExamples}
 For any $\alpha > 0$ there exists a solution to $(\ref{DMA})$ that vanishes at $0$ and lies above $c|x_2|/|\log x_2|^{1+1/\alpha}$, and
 whose Hessian is not in $L\log^{1 + 1/\alpha} L$.
\end{prop}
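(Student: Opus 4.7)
The plan is to construct the example as essentially a one-dimensional convex function viewed on $\mathbb{R}^2$, exploiting the fact that a function of only one variable has vanishing two-dimensional Monge-Amp\`ere measure. Let $\gamma = 1+1/\alpha$ and define $\psi(s) = s/|\log s|^\gamma$ for $s \in (0, s_0]$, smoothed at $0$ (so $\psi(0)=0$) and extended symmetrically and convexly. A direct computation shows $\psi'(s) \to 0$ as $s \to 0^+$ and $\psi''(s) \sim \gamma s^{-1}|\log s|^{-\gamma-1}$, so $\psi$ is convex near $0$. Set $u(x_1,x_2) := \psi(|x_2|)$. Then $u$ is convex on $\mathbb{R}^2$, $u(0)=0$, and $u \geq c|x_2|/|\log x_2|^{1+1/\alpha}$ by construction.

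The crucial observation is that the Monge-Amp\`ere measure of $u$ vanishes identically, so $\det D^2 u \leq 1$ holds trivially. Indeed, the subdifferential $\partial u(x_1,x_2)$ is always contained in the $p_2$-axis of $\mathbb{R}^2$: at smooth points $\nabla u = (0, \psi'(|x_2|)\operatorname{sign}(x_2))$, and at points on $\{x_2=0\}$ the subdifferential is a segment of the $p_2$-axis. Hence the image $\partial u(E)$ of any Borel set $E$ has two-dimensional Lebesgue measure zero.

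On the other hand, the $(2,2)$-entry of the distributional Hessian is $\psi''(|x_2|)\,dx_1dx_2$, and using $\log \psi''(|x_2|) \sim |\log x_2|$ we estimate
\[
\int_{\Omega}|D^2 u|\,(\log|D^2 u|)^{1+1/\alpha}\,dx \sim \int_0^\delta \frac{|\log s|^{1+1/\alpha}}{s\,|\log s|^{2+1/\alpha}}\,ds = \int_0^\delta \frac{ds}{s\,|\log s|} = \infty
\]
on any neighborhood $\Omega$ of the $x_1$-axis, so $D^2 u \notin L\log^{1+1/\alpha} L$. To produce an honest solution of $(\ref{DMA})$ on a bounded convex domain with zero boundary values, one embeds the construction in a rectangle and passes to the Alexandrov solution with the same Monge-Amp\`ere measure on the interior; the resulting boundary correction does not affect the asymptotics at the origin.

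The main (and essentially only) point requiring justification is that a one-dimensional convex function has vanishing Monge-Amp\`ere measure in two dimensions, despite its distributional Hessian being as singular as $\psi''$ is allowed to be. This degeneracy of the gradient map is what creates the anisotropy between the Monge-Amp\`ere side (trivially bounded) and the distributional Hessian (failing $L\log^{1+1/\alpha}L$) at the sharp rate $|x_2|/|\log x_2|^{1+1/\alpha}$.
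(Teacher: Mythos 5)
There is a genuine gap, and it is precisely the step you dismiss as a ``boundary correction.'' Your $u(x_1,x_2) = \psi(|x_2|)$ is a pure function of $x_2$, so $u$ vanishes identically on the $x_1$-axis and does not separate from its supporting plane in the $x_1$-direction; consequently its sublevel sets are unbounded strips, and there is no bounded convex $\Omega$ on which $u$ (minus any constant) has zero boundary data. Moreover, the fix you propose makes things worse: the Alexandrov solution on a rectangle with \emph{zero} Monge-Amp\`ere measure and zero boundary data is, by the comparison principle, identically zero --- it does not retain the asymptotics of $\psi$ at the origin. Any naive way of adding growth in $x_1$ also fails: if you try $\psi(|x_2|)+\epsilon x_1^2$, the cross-product $\det D^2u = 2\epsilon\,\psi''(|x_2|)$ is unbounded near $x_2=0$; if you subtract a constant to hit the boundary you lose $u(0)=0$. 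The whole difficulty of the proposition is to make the graph lift off the $x_1$-axis so that $u$ has bounded sublevel sets \emph{while keeping $\det D^2 u$ bounded even though $u_{22}$ is blowing up}. The paper does this with a carefully tuned exponential cusp $\Omega_1 = \{|x_2| < h(x_1)e^{-1/x_1^\alpha}\}$, inside which $u = x_1^{\alpha+1}e^{-1/x_1^\alpha} + x_1^{\alpha+1}e^{1/x_1^\alpha}x_2^2$: the $x_1$-curvature there is just large enough to produce growth, yet the geometry of the cusp keeps $\det D^2 u$ bounded. Your proposal contains the easy half of the argument (the $L\log^{1+1/\alpha}L$ computation for $g=\psi$ on the outside region) but omits the half that carries the content.

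A secondary point: these examples are presented as showing the optimality of Theorem \ref{LogarithmicPropagation}, i.e.\ that a singularity of slope $|x_2|/|\log x_2|^{1+\epsilon}$ need \emph{not} propagate along the $x_1$-axis. In the paper's construction $u$ vanishes only at the origin. Your $u=\psi(|x_2|)$ vanishes on the entire $x_1$-axis, so its singularity trivially propagates; even if the boundary issue could be repaired, it would not make the intended point.
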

\begin{proof}
 Let $\Omega_1 = \{|x_2| < h(x_1)e^{-1/x_1^{\alpha}}\}$ for some positive even function $h$ to be determined. (By $x^{\gamma}$ we mean $|x|^{\gamma}$). 
 In $\Omega_1$, define
 $$u_0(x_1,x_2) = x_1^{\alpha + 1}e^{-1/x_1^{\alpha}} + x_1^{\alpha+1}e^{1/x_1^{\alpha}}x_2^2.$$
 We would like to glue this to a function of $x_2$ on $\Omega_2 = \mathbb{R}^2 \backslash \Omega_1$, which imposes the condition $\partial_1u_0 = 0$ on the boundary.
 Computing, we find that
 $$h^2(t) = \frac{1 + (\alpha + 1)t^{\alpha}/\alpha}{1- (\alpha + 1)t^{\alpha}/\alpha} = 1 + 2\frac{\alpha + 1}{\alpha} t^{\alpha} + O(t^{2\alpha}).$$ 
 In this way we ensure that $u_0$ glues in a $C^1$ manner across $\partial\Omega_1$ to some function $g(|x_2|)$ in $\Omega_2$ defined by
 $$g(h(t)e^{-1/t^{\alpha}}) = t^{\alpha + 1}(1 + h^2(t))e^{-1/t^{\alpha}}.$$
 The agreement of derivatives on $\partial \Omega_1$ gives
 $$g'(h(t)e^{-1/t^{\alpha}}) = 2t^{\alpha + 1}h(t),$$
 which upon differentiation and using the formula for $h$ gives
 $$g''(h(t) e^{-1/t^{\alpha}}) = 2(1 + 1/\alpha + o(1))e^{1/t^{\alpha}}t^{2\alpha + 1}.$$
 For $|z|$ small it follows that
 $$g''(z) \geq \frac{1}{|z||\log z|^{2 + 1/\alpha}},$$
 giving the non-integrability claimed (after, say, replacing $x_1$ by $(|x_1|-1)_+$). 

 It remains to show that $\det D^2 u_0$ is positive and bounded. One computes for 
 $$x_2^2 = s^2h(x_1)^2e^{-2/x_1^{\alpha}}, \quad s^2 < 1$$ that
 $$\det D^2 u_0(x_1,x_2) = 2\alpha^2\left\{(1 - s^2) + (\alpha + 1)x_1^{\alpha}(1+s^2)/\alpha\right\} + O(x_1^{2\alpha}),$$
 completing the proof.
\end{proof}

\subsection{Barriers}
We now construct barriers that agree with $|x_2|/|\log x_2|$ except for in a very thin cusp around the $x_1$ axis 
where the Monge-Amp\`{e}re measure is as large as we like. Let 
$$h_{\alpha}(t) = \begin{cases}
          0, \quad t \leq 0 \\
          \frac{1}{2}e^{-1/t^{\alpha}}, \quad t > 0
         \end{cases}
$$
where $\alpha > 0$ is large. Let $\Omega_{1,\alpha} = \{|x_2| < h_{\alpha}(x_1)\}$ be a thin cusp around the positive $x_1$ axis and let $\Omega_{2,\alpha}$ 
be its complement. Our barrier is
$$b_{\alpha}(x_1,x_2) = \begin{cases}
                x_1^{\alpha}e^{-1/x_1^{\alpha}} + x_1^{\alpha}e^{1/x_1^{\alpha}}x_2^2 \quad \text{ in } \quad \Omega_{1,\alpha}, \\
                \frac{5}{2}|x_2|/|\log 2 x_2| \quad \text{ in } \quad \Omega_{2,\alpha}.
               \end{cases}
$$

Note that $b_{\alpha}$ is convex and bounded by $1$ on $\Omega_{2,\alpha} \cap \{|x_2| < \frac{1}{4}\}$, and 
$b_{\alpha}$ is continuous across $\partial \Omega_{1,\alpha}$. 
Furthermore, on $\partial \Omega_{1,\alpha}$ one computes (from inside $\Omega_{1,\alpha}$) that
$$\partial_1b_{\alpha}(x_1,x_2) = \alpha e^{-1/x_1^{\alpha}}\left(\frac{3}{4}x_1^{-1} + \frac{5}{4}x_1^{\alpha - 1}\right) \geq 0,$$
so the derivatives have positive jumps across $\partial \Omega_{1,\alpha}$.

Denote $x_2^2e^{2/x_1^{\alpha}} = a$. One computes in $\Omega_{1,\alpha}$ (where $a \leq 1/4$) that
\begin{align*}
 \det D^2b_{\alpha} &= 2\alpha^2x_1^{-2}\left((1-a) + \frac{\alpha-1 + a(3\alpha + 1)}{\alpha}x_1^{\alpha}
 + \frac{\alpha-1 - a(\alpha + 1)}{\alpha} x_1^{2\alpha}\right) \\
 &\geq \frac{3}{2}\alpha^2x_1^{-2}.
\end{align*}

Finally, let $\Omega:= (-\infty, 1/2] \times [-1/4, 1/4].$ We conclude that $b_{\alpha}$ are convex in $\Omega$, with
$$\det D^2b_{\alpha} \geq 6 \alpha^2 \quad \text{ in } \quad \Omega_{1,\alpha} \cap \Omega,$$
and furthermore
$$b_{\alpha} < \frac{5}{4}2^{-\alpha}e^{-2^{\alpha}} \quad \text{ in } \quad \Omega_{1,\alpha} \cap \Omega.$$

\subsection{Propagation}
We prove Theorem \ref{LogarithmicPropagation} by sliding the barriers $b_{\alpha}$ from the right.

\begin{proof}[{\bf Proof of Theorem \ref{LogarithmicPropagation}}]
 By rescaling and multiplying by a constant, we may assume that
 $$u \geq \frac{5}{2}|x_2|/|\log 2x_2| \quad \text{ in } \quad \{|x_2| < 1/4\} \cap B_1,$$
 with $u(0) = 0$ and $\det D^2u < \Lambda$ for some large $\Lambda$. Choose $\alpha$ so large that $\alpha^2 > \Lambda$.
 Slide the barriers $b_{\alpha}(\cdot - te_1)$ from the right. Since $u \geq b_{\alpha}(\cdot - te_1)$ on $\partial(\Omega_{1,\alpha} + te_1) \cap \Omega$ for all
 $|t|$ small, it follows from the maximum principle that
 $$u(1/2,x_2) \leq b_{\alpha}(1/2,x_2)$$
 for some $(1/2,x_2) \in \overline{\Omega_{1,\alpha}} \cap \Omega$. (Indeed, if not, we can take $t = -\epsilon$ small and get a contradiction; see Figure \ref{Barrier}). 
 Taking $\alpha \rightarrow \infty$, we conclude that $u(e_1/2) = 0$.

 \begin{figure}
 \centering
    \includegraphics[scale=0.35]{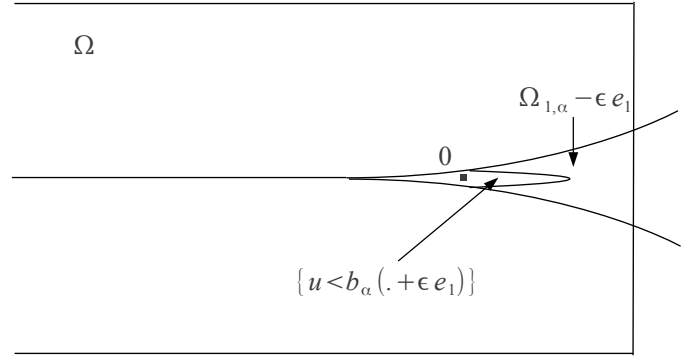}
 \caption{If $u > b_{\alpha}$ on the right edge of $\overline{\Omega_{1,\alpha}} \cap \Omega$, then we get a contradiction by sliding $b_{\alpha}$ to the left.}
 \label{Barrier}
\end{figure}

By convexity, near each point on the $x_1$ axis where $u$ is zero, there is a singularity of the same type as near the origin. 
We can apply the above argument at all such points to complete the proof.
\end{proof}



\section*{Acknowledgments}
This work was supported by NSF grant DMS-1501152. I would like to thank A. Figalli and Y. Jhaveri for helpful comments.



\end{document}